\theoremstyle{plain}
\newtheorem{Thm}{Theorem}[section]
\newtheorem{ThmA}[Thm]{Theorem A}
\newtheorem*{ThmA*}{Theorem A}
\newtheorem*{ThmB*}{Theorem B}
\newtheorem{Prop}[Thm]{Proposition}
\newtheorem{Lem}[Thm]{Lemma}
\theoremstyle{definition}
\newtheorem{Rem}[Thm]{Remark}
\newtheorem{Def}[Thm]{Definition}
\newcommand{\G}{\Gamma}
\newcommand{\T}{\mathcal{T}}
\newcommand{\rank}{\textrm{rank}}
\begin{document}

\title{Kurosh rank of intersections of subgroups of free products of right-orderable groups}

\author{Y. Antol\'{i}n, A. Martino and I. Schwabrow}

\date{\today}
\maketitle
\begin{abstract}
We prove that the reduced Kurosh rank of the intersection of two subgroups $H$ and $K$ of a free product of right-orderable groups is bounded above by the product of the reduced Kurosh ranks of $H$ and $K$.

In particular, taking the fundamental group of a graph of groups with trivial vertex and edge groups, and its Bass-Serre tree, our Theorem becomes the desired inequality of the usual Strengthened Hanna Neumann conjecture for free groups.
\medskip

{\footnotesize
\noindent \emph{2010 Mathematics Subject Classification.} Primary: 20E06, 20E08 

\noindent \emph{Key words.} Free Products, Kurosh rank, Orderability,  Bass-Serre Theory.}

\end{abstract}
\section{Introduction}
Let $H$ and $K$ be subgroups of a free group $F,$ and $\overline{r}(H)=\mathop{ \textrm{max}}\{0,\mathop{\textrm{rank}}(H)-1 \}.$ It was an open problem dating back to the 1950's to find an optimal bound of the rank of $H\cap K$ in terms of the ranks of $H$ and $K.$

In \cite{HNeumann}, Hanna Neumann proved  the following
\begin{equation}\label{eq:Hneumann}\overline{r}(H\cap K)\leq 2 \cdot \overline{r}(H)\cdot\overline{r}(K).
\end{equation}
 The \emph{Hanna Neumann Conjecture} says that \eqref{eq:Hneumann} holds replacing the 2 with a 1.

Later, in \cite{WNeumann}, Walter Neumann improved  \eqref{eq:Hneumann} to
\begin{equation}\label{eq:Wneumann}\sum_{g\in K\backslash F/H}\overline{r}(H^g\cap K)\leq 2\cdot  \overline{r}(H)\cdot\overline{r}(K),\end{equation} where $H^g=g^{-1} Hg.$ The \emph{ Strengthened Hanna Neumann Conjecture}, introduced by Walter Neumann, says that  \eqref{eq:Wneumann} holds replacing the 2 with a 1.

These two conjectures have received a lot of attention, and recently,  Igor Mineyev \cite{Mineyev1} proved that both conjectures are true\footnote{Independently and at the same time, Joel Friedman also proved these conjectures (See \cite{Dicks-F,Friedman})}. The fact that $F$ is right-orderable plays a crucial role in Mineyev's proof.

\begin{Def}\label{def:ord}
 A group $G$ is \emph{right-orderable} if it admits a total order which is invariant under the right $G$-multiplication action.
\end{Def}
In particular, right-orderability is inherited by subgroups and implies torsion freeness.

We recall the well known Kurosh Subgroup Theorem \cite{Kurosh}, whose proof can be found in, for example, \cite{Dicks-Dunwoody}.

\begin{Thm}[Kurosh Subgroup Theorem]
\label{kurosh}
Let $G=\ast_{i\in I} A_i$ be a free product and $H$ a subgroup of $G$. Then \begin{equation}\label{eq:kuroshdecom}
H= \ast (H \cap  A_i^g ) \ast F,
\end{equation} where the $g$ ranges over a set of double coset representatives in $A_i \backslash G / H$ for each $i\in I$ and $F$ is a free group.

\end{Thm}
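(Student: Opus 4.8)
The plan is to use Bass--Serre theory, as the keywords of the paper suggest. First I would realise $G=\ast_{i\in I}A_i$ as the fundamental group of a graph of groups. Take the underlying graph $Y$ to be a star: a central vertex $v_0$ carrying the trivial group, one leaf vertex $v_i$ for each $i\in I$ carrying the group $A_i$, and one edge joining $v_0$ to $v_i$ carrying the trivial group. Since $Y$ is a tree and all edge groups are trivial, the fundamental group of this graph of groups is precisely $\ast_{i\in I}A_i=G$, and the associated Bass--Serre tree $T$ carries a $G$-action whose edge stabilisers are all trivial, whose vertices lying over $v_0$ have trivial stabiliser, and whose vertices lying over $v_i$ have stabilisers the conjugates $gA_ig^{-1}$ of $A_i$.

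Next I would restrict this action to the subgroup $H\leq G$. Then $H$ acts on the same tree $T$, still with trivial edge stabilisers. By the Fundamental Theorem of Bass--Serre theory, $H$ is isomorphic to the fundamental group of the quotient graph of groups $H\backslash\!\backslash T$. The underlying quotient graph $H\backslash T$ has two kinds of vertices: those over $v_0$, whose $H$-stabilisers are trivial, and those over $v_i$. The $H$-orbits of the latter correspond bijectively to the double cosets in $A_i\backslash G/H$, and the vertex group attached to the orbit of the coset represented by $g$ is $H\cap gA_ig^{-1}$, i.e.\ (up to the conjugation convention, after replacing $g$ by $g^{-1}$) $H\cap A_i^{g}$.

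Finally I would read off the decomposition. Because every edge group of $H\backslash\!\backslash T$ is trivial, the fundamental group of this graph of groups splits as the free product of its vertex groups together with a free group $F$ whose rank equals the first Betti number of the quotient graph $H\backslash T$, equivalently the free group on the edges outside a chosen spanning tree. The trivial vertex groups over $v_0$ contribute nothing to this free product, leaving
\begin{equation*}
H=\ast\,(H\cap A_i^{g})\ast F,
\end{equation*}
with $g$ running over a set of double coset representatives for each $i\in I$, which is the asserted decomposition.

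The step I expect to be the main obstacle is the bookkeeping in the quotient graph of groups rather than the invocation of the Bass--Serre machinery itself. One must verify carefully that the $H$-orbits of $A_i$-vertices are in natural bijection with $A_i\backslash G/H$, that the stabiliser computation genuinely yields $H\cap A_i^{g}$ for the chosen representatives, and that collapsing the trivial vertex and edge groups leaves exactly the claimed free factor $F$ and no spurious relations. Making these identifications precise is where the care is needed.
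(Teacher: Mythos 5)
Your proposal is correct and is essentially the proof the paper points to: the paper does not reprove the theorem but cites Dicks--Dunwoody, whose argument is exactly this Bass--Serre one (realise the free product as a graph of groups with trivial edge groups, let $H$ act on the Bass--Serre tree, and read off the vertex groups $H\cap A_i^g$ indexed by $A_i\backslash G/H$ plus a free factor from the quotient graph). Your handling of the conjugation convention and the double-coset bookkeeping is the standard identification and raises no difficulty.
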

In view of the theorem, one would like to define the Kurosh rank of $H$ with respect to the free product $\ast_{i\in I}A_i$ as the number of non-trivial factors $(H\cap  A_i^g )$ in \eqref{eq:kuroshdecom} plus the rank of $F$. One has to prove that this number is independent of the double coset representatives. This is done in  \cite[Lemma 3.4]{Ivanov08}.

However we prefer to give a different definition based on groups acting on trees.
In our approach, the Kurosh rank of $H\leqslant G$ will depend on the action on a $G$-tree $T$ rather in a free product decomposition of $G$. We will denote the Kurosh rank by $\kappa_T(H)$ and the reduced Kurosh rank  by $\overline{\kappa}_T(H)=\max\{0, \kappa_T(H)-1\}.$ We will give the formal definition of $\kappa_T$ in Section \ref{sec:kr}.

It is natural to consider to what extent Mineyev's Theorem can be generalised to intersections of subgroups of free products. It turns out that the Kurosh rank is the appropriate concept to consider in this context.  A proof of the Howson property for free products with respect to the Kurosh rank can be found in  \cite[Theorem 2.13 (1)]{Sykiotis}.

Let $G$ be a group, $T$ a $G$-tree with trivial edge stabilisers and $H,K\leqslant G$. Implicitly in \cite[Theorem 3]{Soma}, Soma proved that $$\overline{\kappa}_T( H\cap K ) \leq 18 \overline{\kappa}_T(H) \overline{\kappa}_T(K).$$ To our knowledge, Burns,  Chau and  Kam, were the first studying explicitly the kurosh rank of the intersection of subgroups in \cite{Burns-Chau-Kam}. 
Ivanov \cite{Ivanov01} found  that the inequality  \eqref{eq:Hneumann} holds for factor free subgroups of free products of right-orderable groups. Later, Dicks and Ivanov \cite{Dicks-I} extended this result to factor free subgroups of free products of torsion-free groups. In \cite{Ivanov08}, Ivanov using results obtained in \cite{Dicks-I} proved that if $G$ is torsion free then
$$\overline{\kappa}_T( H\cap K ) \leq 2 \overline{\kappa}_T(H) \overline{\kappa}_T(K).$$

Our main result is the following
\begin{ThmA}
Let $G$ be an right-orderable group and $T$ a $G$-tree with trivial edge stabilisers. Let $H,K$ be subgroups of $G.$  Then $$\sum_{g\in K \backslash G /H}\overline{\kappa}_T( H^g\cap K ) \leq \overline{\kappa}_T(H) \overline{\kappa}_T(K).$$
\end{ThmA}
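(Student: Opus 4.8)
The plan is to reinterpret the reduced Kurosh rank as the negative of an Euler characteristic of a quotient graph of groups, to package the intersections $H^g\cap K$ as the components of a single fiber product of the $H$- and $K$-quotients of $T$, and then to extract the sharp constant $1$ from the order on $G$. First I would dispose of the degenerate cases: if either $\overline{\kappa}_T(H)$ or $\overline{\kappa}_T(K)$ is infinite the right-hand side is infinite, and if say $\overline{\kappa}_T(H)=0$ then $\kappa_T(H)\le 1$, so $H$ either is trivial or fixes a vertex and acts with trivial fundamental group on $T$, a situation in which the sum on the left is handled directly. Thus I may assume both reduced Kurosh ranks are finite and work with the finite quotient graphs of groups $T/H$ and $T/K$. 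Writing $Y=G\backslash T$ for the underlying graph, the projections $T/H\to Y$ and $T/K\to Y$ are graph morphisms, and I would record the elementary identity expressing $\kappa_T(H)$ through the number of edges, vertices, and vertices carrying a nontrivial stabiliser of $T/H$.

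The central construction is the fiber product $P=(T/H)\times_Y(T/K)$, whose cells are pairs of cells of $T/H$ and $T/K$ lying over a common cell of $Y$; equivalently $P$ is the quotient of $T\times_Y T$ by $H\times K$. The double cosets $K\backslash G/H$ are in bijection with the components of $P$, and for the component $C_g$ determined by $KgH$ the fundamental group together with the vertex-stabiliser data computes precisely the Kurosh decomposition of $H^g\cap K$ relative to $T$. Because the edge stabilisers are trivial, the raw counts multiply when graded over the cells of $Y$: the number of vertices (resp. edges) of $P$ over a cell of $Y$ is the product of the corresponding numbers for $T/H$ and $T/K$. Summing the Euler-characteristic formula over the components $C_g$ then recovers $\sum_g\kappa_T(H^g\cap K)$ up to the contribution of the tree components and up to a bound on the factor terms: a vertex of $C_g$ has nontrivial $(H^g\cap K)$-stabiliser only when both coordinate vertices carry nontrivial stabilisers \emph{and} the two stabilisers (subgroups of a common vertex group) meet nontrivially, which already forces the factor contributions to obey an inequality rather than an equality. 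This step yields a Hanna Neumann--Walter Neumann type bound with a multiplicative constant.

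The crux, and the only place orderability is used, is removing that constant to obtain the factor $1$. Here I would adapt the Mineyev--Dicks mechanism: a right-invariant total order on $G$ induces a $G$-equivariant rule selecting at each vertex of $T$ a distinguished incident edge, hence a $G$-equivariant subforest of $T$. Transporting this forest through the fiber product $P$ picks out, in each component $C_g$, a subforest whose complement injects exactly the excess that $\overline{\kappa}_T$ measures into the product of the corresponding data for $H$ and $K$. Concretely, the order furnishes an injection realising $\sum_g\overline{\kappa}_T(H^g\cap K)\le\overline{\kappa}_T(H)\,\overline{\kappa}_T(K)$ component by component, so that the $\max\{0,\cdot\}$ in the reduced rank is respected.

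I expect this last injectivity argument to be the main obstacle. One must check that the order-selected forest meets every non-tree component in the correct size, that it interacts compatibly with the vertices of nontrivial stabiliser, and that the factor contributions injected into the product are counted without overlap. The genuinely new difficulty, absent in the classical free-group case where all vertex groups are trivial, is precisely the bookkeeping for these nontrivial vertex (factor) stabilisers: intersections of factors may collapse to the trivial group, and the order must be arranged to be compatible with the induced orders on the vertex groups so that the factor terms and the free (forest) terms are controlled simultaneously. The possibly non-finitely-generated case would then be folded into the same counting by working with cardinals throughout.
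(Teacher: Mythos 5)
Your outer scaffolding is essentially the paper's own: dispose of degenerate cases, map the disjoint union over double cosets diagonally into a product of the $H$-data and the $K$-data, note that triviality of edge stabilisers makes this map injective on edges, and let an order-derived selection of edges do the exact counting. The problem is that the entire mathematical content of the paper lies in the step you defer as ``the main obstacle.'' What is needed is a notion of counted edge, defined relative to the minimal subtree $T_H$, with two properties: (i) monotonicity under inclusion of subtrees, so that a counted edge for $T_{H^g\cap K}$ is simultaneously counted for $T_{H^g}$ and for $T_K$; and (ii) the number of $H$-orbits of counted edges equals $\overline{\kappa}_T(H)$ \emph{exactly}. In the paper the counted edges are the bridges --- edges that are $<$-largest in some reduced bi-infinite path of $T_H$; property (i) is immediate from the definition, and property (ii) is Theorem~\ref{T:Main}, proved by showing that the stabiliser of each component of $T_H$ minus its bridges (each island) has Kurosh rank exactly $1$: nontrivial by Proposition~\ref{notriv}, and of Kurosh rank at most $1$ by Proposition~\ref{two} (Kurosh rank at least $2$ forces a bridge). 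That island analysis is precisely the vertex-stabiliser bookkeeping you flag as ``the genuinely new difficulty'' and then leave unresolved; without it there is no proof of the counting step, and hence no proof of the theorem.

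Moreover, the mechanism you do sketch --- a $G$-equivariant rule picking one distinguished incident edge at each vertex of $T$, so that a subforest and its complement do the counting --- fails concretely. First, the rule must be applied inside $T_H$ (and inside $T_K$ and $T_{H^g\cap K}$), not on $T$ globally: the distinguished edge at a vertex of $T_H$ need not lie in $T_H$. Second, even relative to $T_H$, any such local (per-vertex) rule miscounts. For example, let $G=H=\langle g\rangle\cong\Z$ act on a line $T$ with two edge orbits $a,b$, ordered lexicographically as in the paper so that every edge of orbit $b$ exceeds every edge of orbit $a$; then every $b$-edge is $<$-larger than both of its neighbouring edges, so under the natural ``each vertex selects its $<$-smallest incident edge'' rule the $b$-edges form a whole $H$-orbit of complementary (counted) edges, whereas $\overline{\kappa}_T(H)=0$. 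The paper's bridge condition is global --- a bridge must dominate an entire ray in each direction, not merely its two neighbours --- and correctly yields zero bridges in this example. Finally, a smaller point: with nontrivial vertex stabilisers the components of the fiber product $(T/H)\times_Y(T/K)$ are not in bijection with $K\backslash G/H$; fortunately no such bijection is needed, since the paper's argument only uses that the map from the disjoint union over double cosets is injective on edges.
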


\begin{Rem}
We note that if one considers a free group acting freely on a tree, then the Kurosh rank of a subgroup and its genuine rank agree. Therefore the Strengthened Hanna Neumann Conjecture is a corollary of Theorem A.
\end{Rem}

\begin{Rem}
The main interest is when all these quantities are finite, but it is also true when they are infinite, in which case we adopt the convention that $0 \cdot \infty=0$.
\end{Rem}

Mineyev's brilliant Hilbert-module proof \cite{Mineyev1}  yields to a very general result. Our proof of Theorem A, is basically the same as the simplified version of Mineyev's proof due to Warren Dicks \cite{Dicks}, which only applies to the special case of the strengthened Hanna Neumman conjecture.  A proof based on \cite{Mineyev1} and \cite{Dicks} that uses neither Hilbert-module theory nor
Bass-Serre theory was given in \cite{Mineyev2}.

We note that the main tool for passing from the free group case and the free product case is the proof of Theorem~\ref{thm:formula} and its application in Theorem~\ref{T:Main}, which says that the reduced Kurosh rank is equal to the number of orbits of edges in the Dicks tree.

To obtain examples of groups and trees for Theorem A, we can consider $G$ to be a graph of groups with right-orderable vertex groups and trivial edge groups,  and $T$ the corresponding Bass-Serre tree.  In this case, the group is a free product of right-orderable groups.  One can prove that such  group is right-orderable  using the Kurosh Subgroup Theorem and \cite[Theorem 2]{Burns-Hale}. Another fairly simple proof of this fact is given in \cite[Corollary 36]{Bergman}.

\section{Kurosh rank}\label{sec:kr}
Our notation and basic reference for groups acting on trees is \cite{Dicks-Dunwoody}. The groups are acting on the right.

\begin{Def}[Kurosh rank]\label{def:kurosh}
Let $G$ be a group and $T$ a $G$-tree with trivial edge stabilisers and $H$ a subgroup of $G.$

Let $c_T(H) \in \mathbb{N} \cup \{ \infty \}$ be the number of vertices $vH\in VT/H$ such that $v$ has a non-trivial $H$-stabiliser. This is well defined since it is independent of the choice of the representative of $vH.$

The {\em Kurosh rank} of $H$ with respect to $T$ is defined to be
$$\kappa_T(H):=c_T(H)+\mathop{\textrm{rank}} (T/H),$$
where the $\mathop{\textrm{rank}}(T/H),$ is the number of edges outside of a maximal subtree of $T/H$ (equivalently, $\rank(T/H)$  is the rank of fundamental group of the graph $T/H$, which is a free group).

The {\em reduced Kurosh rank} of $H$ with respect to $T$ is defined to be
$$\overline{\kappa}_T(H):= \max\{0, \kappa_T(H)-1\}.$$
\end{Def}

\begin{Rem}
We note that the Kurosh rank of a subgroup depends on the free product decomposition and not just the isomorphism type of the subgroup. Therefore, suppose we took the free product of two surface groups, $S_1, S_2$. Then a free subgroup of infinite rank inside $S_1$ would have Kurosh rank 1, whereas a free subgroup of infinite rank meeting no conjugate of either $S_1$ or $S_2$ would have infinite Kurosh rank.
\end{Rem}

\begin{Prop}\label{P:kappa}
Let $H$ be a subgroup of $G$.
Let $T'$ be a $H$-subtree of $T$. Then $\kappa_T(H)=\kappa_{T'}(H).$

In particular, $T_H/H$ is finite (as a graph) if and only if $\kappa_T(H)$ is finite.
\end{Prop}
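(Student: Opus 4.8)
The plan is to show separately that each of the two ingredients of $\kappa_T(H)$ is unchanged when one passes from $T$ to an $H$-subtree $T'$: the count $c(H)$ of vertex-orbits with non-trivial stabiliser, and the rank of $T/H$. Throughout I would use the $H$-equivariant nearest-point retraction $r\colon T\to T'$, which exists because $T'$ is a subtree (hence convex) that is $H$-invariant, and $H$ acts by isometries. For a vertex $v\notin T'$ write $p(v)=r(v)\in T'$ for its projection and let $e_v$ be the unique edge at $v$ lying on the geodesic $[v,p(v)]$, the edge ``pointing towards $T'$''.

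First I would settle $c(H)$. Since $T'$ is $H$-invariant, $VT/H$ is the disjoint union of $VT'/H$ and the orbits of vertices lying outside $T'$, and the $H$-stabiliser of a vertex of $T'$ does not depend on which tree we view it in; so it suffices to prove that every vertex $v\notin T'$ has trivial stabiliser. Suppose $1\neq h\in H$ fixes such a $v$. As $h$ is an isometry preserving $T'$, uniqueness of the nearest point forces $p(v)h=p(v)$; hence $h$ maps the geodesic $[v,p(v)]$ to itself fixing both endpoints, so it fixes $[v,p(v)]$ pointwise and in particular fixes the edge $e_v$. Since edge stabilisers are trivial this gives $h=1$, a contradiction. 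Therefore $c_{T}(H)=c_{T'}(H)$.

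Next, for the rank I would compare Euler characteristics. An edge of $T$ lies outside $T'$ exactly when one of its endpoints lies outside $T'$ (if both endpoints are in the subtree $T'$, so is the edge), and the assignment $v\mapsto e_v$ is an $H$-equivariant bijection from the vertices outside $T'$ to the edges outside $T'$ (injective because $e_v$ determines its endpoint farther from $T'$, surjective by sending an edge outside $T'$ to its endpoint farther from $T'$). Passing to orbits, the number of vertex-orbits outside $T'$ equals the number of edge-orbits outside $T'$. Since $T/H$ and $T'/H$ are connected and $T'/H$ is obtained from $T/H$ by deleting these two matched families, the formula expressing the rank of the fundamental group of a connected graph as $|E|-|V|+1$ shows that $T/H$ and $T'/H$ have equal rank (this persists as an equality of cardinals in the infinite case). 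Adding the two equalities gives $\kappa_T(H)=\kappa_{T'}(H)$.

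For the final assertion I would take $T'=T_H$, so that $\kappa_T(H)=\kappa_{T_H}(H)=c_{T_H}(H)+\operatorname{rank}(T_H/H)$. If $T_H/H$ is finite both summands are finite, so $\kappa_T(H)$ is finite; this direction is immediate. The substantial direction is the converse, and it is where I expect the main obstacle. Assuming $\kappa_T(H)$ finite, both the rank and $c_{T_H}(H)$ are finite, and I would exploit minimality of $T_H$ to rule out an infinite quotient. The key input is the description of $T_H$ as the union of the axes of the hyperbolic elements of $H$ (or a single fixed vertex when $H$ is elliptic, where the claim is trivial): this forces every vertex of $T_H$ to have degree at least $2$, so every leaf of $T_H/H$ has non-trivial stabiliser and there are only finitely many of them; finiteness of the rank then bounds the vertices of degree $\geq 3$, and together with finiteness of $c_{T_H}(H)$ rules out vertices of infinite degree, so $T_H/H$ is locally finite with finitely many vertices of degree $\neq 2$. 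Were $T_H/H$ infinite it would then contain an infinite simple ray of degree-$2$ vertices with trivial stabiliser; lifting it gives a non-branching ray in $T_H$, and no hyperbolic element can translate along this ray, for otherwise two distinct vertices of the ray would be identified in the quotient, contradicting simplicity of the ray. Hence these vertices lie on no axis, contradicting that $T_H$ is the union of axes, and so $T_H/H$ must be finite. The delicate point I would treat most carefully is exactly this lifting step, ensuring that an end of the quotient produces a ray in $T_H$ that avoids every axis.
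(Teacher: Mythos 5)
Your proof of the main equality $\kappa_T(H)=\kappa_{T'}(H)$ is correct, and in fact supplies details the paper omits (its proof is literally ``an easy exercise in Bass--Serre theory''). The projection argument showing that every vertex outside $T'$ has trivial stabiliser, and the $H$-equivariant bijection $v\mapsto e_v$ between vertices outside $T'$ and edges outside $T'$, are exactly the right ingredients. One presentational caveat: for infinite graphs the formula $|E|-|V|+1$ is not meaningful as cardinal arithmetic, so you should phrase the rank comparison via spanning trees: your bijection shows that each outside vertex orbit of $T/H$ carries exactly one ``downward'' edge orbit, so the outside edges form a forest attached to $T'/H$; hence a spanning tree of $T'/H$ together with all outside edges is a spanning tree of $T/H$, and the sets of complementary edges coincide. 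This is a fix in wording, not in substance.

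The genuine gap is in the converse of the ``in particular'' statement. The inference ``finite rank and finitely many leaves imply finitely many vertices of degree $\geq 3$'' is false for infinite graphs: an infinite tree in which every vertex has degree $3$ has rank $0$, no leaves, and infinitely many branch vertices. Likewise, finiteness of $c_{T_H}(H)$ does not rule out vertices of infinite degree: a vertex of infinite degree with infinite rays attached creates no leaves, no rank, and no non-trivial stabilisers. These Euler-characteristic-style counts are valid only for finite graphs -- which is precisely what you are trying to prove -- so the argument is circular at this step, and your endgame (the degree-$2$ ray contradicting that $T_H$ is a union of axes) never gets off the ground, since in the $3$-regular-tree scenario no such ray exists. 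A correct route uses minimality directly: since $\kappa_{T_H}(H)=\kappa_T(H)<\infty$, the action on $T_H$ gives $H=H_1\ast\cdots\ast H_c\ast F$ with $c$ and $\mathrm{rank}(F)$ finite. Let $S$ be the Bass--Serre tree of this splitting, which has finitely many edge orbits, and define an $H$-equivariant map $S\to T_H$ by sending the vertex fixed by $H_i$ to a vertex of $T_H$ fixed by $H_i$, choosing images for the free vertex orbits arbitrarily, and sending edges to geodesics. The image is an $H$-invariant subtree of $T_H$ with finitely many $H$-orbits of edges, and minimality of $T_H$ forces the image to be all of $T_H$; hence $T_H/H$ is finite.
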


\begin{proof} This is an easy exercise in Bass-Serre theory.
\end{proof}

\begin{Thm}\label{thm:formula}
Let $T$ be and $H$-tree and let $\T$ an $H$-tree obtained from $T$ by collapsing an $H$-subforest of $T$. Then
\begin{equation}\label{eq:kurfor}
\kappa_T(H)=\sum_{v\in V\T/H} \kappa_T(H_v)+\rank(\T/H).
\end{equation}
In particular, the left hand side is infinite if and only if the right hand side is infinite.
\end{Thm}
\begin{proof}
Let $\phi\colon T\to \T$ be the natural quotient map, notice that this is an $H$-equivariant graph map. We let $\overline{\phi}$ be the induced graph map $T/H\to \T/H$.

As $\phi$ consists of the collapse of various subtrees to points, we have that $\phi^{-1}(v)=T_v$ is connected for any vertex $v$ of $T$. Hence $T_v$ is an $H_v$-subtree of $T$, where $H_v$ is the stabiliser of $v\in V\T$.

Note that if, for some $h \in H$ and $T_vh \cap T_v \neq \emptyset$, then $vh=v$ and hence $h \in H_v$. In particular, the graph map $T_v/H_v\to T/H$ is injective.

For every $vH\in V\T/H$ we let $\G_v$ be the subgraph $T_v/H_v$ of $T/H$ and  $(\G_v, H_v(-))$ be the associated graph of groups, which has fundamental group $H_v$ by Bass-Serre Theory. Since every vertex of $T$ is in the pre-image of some vertex of $\T$, we have that  $$VT/H=\bigsqcup_{vH \in V\T/H} V\G_v.$$

We view the edge set $E \T$ as a subset of $ET$, and the remaining edges of $T$ are precisely those which map to a vertex in $\T$. Hence it is clear that,
\begin{equation}
\label{flavour}
ET/H=\bigsqcup_{vH\in V\T/H} E\G_v \sqcup \bigsqcup_{eH\in E\T/H} eH.
\end{equation}

We now consider the rank of the graph, $T/H$, which is simply the number of edges outside of any maximal subtree. We construct a maximal subtree first by taking the union of maximal subtrees of each $\G_v$, and then enlarging the resulting forest to a maximal subtree, $Y$, of $T/H$. We note that the map $\overline{\phi}$ simply consists of collapsing each $\G_v$ to a point. Since each $Y \cap \G_v$ is connected, $\overline{\phi}(Y)$ must be a tree, and contains every vertex of $\T/H$ as $\overline{\phi}$ is surjective. Hence $\overline{\phi}(Y)$ is a maximal subtree of $\T/H$. As before, we think of edges of $\T/H$ as being a subset of the edges of $T/H$ and, hence the edges of $\overline{\phi}(Y)$ as being a subset of the edges of $Y$.

Therefore, counting edges outside of $Y$ and bearing in mind the disjoint union given by \eqref{flavour} immediately gives that,
\begin{align}\label{eq:ranks}
\rank(T/H) &= \sum_{vH\in V \T/H}\rank (\G_v) + \rank(\T/H).
\end{align}

In particular the left hand side is infinite if and only if (any term of) the right hand side is infinite.

By Proposition \ref{P:kappa}, $\kappa_T(H_v)=\kappa_{T_v}(H_v)$. Since $T_v/H_v=\G_v$, we have that $\kappa_T(H_v)=\rank(\G_v)+c_{T_v}(H_v)$. Moreover, since the $\cup_{v\in V\T/H} V\G_v=VT/H$ and $(\G_v, H(-))$ is the restriction of the graph of groups $(T/H, H(-))$ to the subgraph $\G_v$ we have that $c_T(H)=\sum_{v\in V\T/H} c_{T_v}(H_v)$.
Thus
\begin{equation}\label{eq:ct}
c_T(H)+ \sum_{vH\in V\T/H} \rank(\G_v)=\sum_{vH\in V\T/H} \kappa_{T}(H_v).
\end{equation}
Again, the left hand side is infinite if and only if the right hand side is infinite.

Now summing \eqref{eq:ct} and \eqref{eq:ranks} we get that,
$$
\kappa_T(H)+ \sum_{vH\in V\T/H} \rank(\G_v)=\sum_{vH\in V\T/H} \kappa_T(H_v)+\rank(\T/H)+ \sum_{vH\in V\T/H} \rank(\G_v).
$$

Hence we are done if $\sum_{vH\in \T/H} \rank(\G_v)$ is finite. However, in the case that $\sum_{vH\in \T/H} \rank(\G_v)$ is infinite, we deduce from \eqref{eq:ranks} that the left hand side of \eqref{eq:kurfor} is infinite, and from \eqref{eq:ct} that the right hand side of \eqref{eq:kurfor} is infinite.

\end{proof}


\section{Main Argument}

Throughout this section $G$ will be an right-orderable group and $T$ a $G$-tree with trivial edge stabilisers. $G$ will act on $T$ on the right.

An element $g \in G$ is called {\em elliptic} if it fixes a point in $T$ and is called {\em hyperbolic} if it does not.

Given a hyperbolic $g \in G$, the {\em axis} of $g$, denoted $A_g$, consists of the subtree of points displaced by the minimal amount by $g$ (with respect to the path metric). This is always non-empty and homeomorphic to the real line.

Associated to any non-trivial subgroup (not necessarily finitely generated), $H \leqslant G$ there is a minimal $H$-invariant subtree $T_H$ of $T$. In general, this will be the union of the axes of hyperbolic elements of $H$ except when every element of $H$ is elliptic. In this case, a result of Serre says that any finite set of elements of $H$ have a common fixed point and therefore, as edge stabilisers are trivial, there will be a unique point for the whole of $H$, in which case $T_H$ will be the fixed vertex for $H$.

\begin{Rem}\label{rem:finiteG}
If, in Theorem A, either $\kappa_T(H)$ or $\kappa_T(K)$ is equal to infinity, then the theorem holds. So without loss of generality we can assume that $\kappa_T(H)$ and $\kappa_T(K)$ are finite. Hence $\kappa_T(\langle H\cup K\rangle)$ is also finite, and in view of Proposition \ref{P:kappa}, we can change $G$ to $\langle H\cup K \rangle,$ $T$ to $T_{\langle H\cup K\rangle}$ and hence we can assume that $T/G$ is finite.
\end{Rem}

Throughout the rest of the section $T/G$ will be a finite graph.

We fix an order $<$ of $G,$ and we use it to construct an order on the edges of $T.$ We first put any total order on $ET/G,$ which is a finite set, and we denote it  again by $<.$ Then, we order $(ET/G)\times G$ lexicographically, and use the natural bijection of this set with $ET$ to order it. That is $(eG,g)\leq (fG,h)$ if and only if $eG<fG$ or $eG=fG$ and $g\leq h.$
This ordering on the edges of $T$  is invariant under the action of $G$. We henceforth fix this ordering.

\begin{Def}[Dick's Trees]
Let $T'$ be a subtree of $T$.

An edge $e$ of $T'$  is  a  {\it $T'$-bridge}  if there is a reduced bi-infinite path in $T'$, containing $e$ and in which $e$ is the $<$-largest edge.

For any subgroup $H$ of $G$, we call an edge an {\it $H$-bridge} if it is a $T_H$-bridge. Note that $H$ acts freely on the set of $H$-bridges.

Note that if $T_0 \subseteq T_1$ are subtrees of $T$ then any $T_0$-bridge is a $T_1$-bridge. Hence, if $H \leqslant K $ are subgroups of $G$, then any $H$-bridge is also a $K$-bridge.

An {\it $H$-island} is a component of $T_H$ after all the $H$-bridges have been removed. Note that $H$ acts on the set of $H$-islands.

The Dicks $H$-tree, $\mathcal{T}_H,$  is the $H$-tree whose vertices are the $H$-islands and whose edges are the $H$-bridges. Note that all the edge stabilisers in $\mathcal{T}_H$ are trivial.
\end{Def}

\begin{Rem}
The purely combinatoric concept of a $T$-bridge is introduced by Dicks in \cite{Dicks} and corresponds to an order-essential edges  in Mineyev's terminology \cite[Definition 2]{Mineyev1}.  The concept of islands correspond to relative components to the set of order-essential edges. The relationship between bridges and order-essential edges is not at all obvious from the definitions and that is why we use Dicks terminology.
\end{Rem}

%
%

\begin{Prop}
\label{ecalc}
Let $1 \neq H$ be a subgroup of $G$ with $\kappa_T(H)<\infty$. Suppose that for every $H$-island $I$ in $T_H,$ the $H$-stabiliser $H_I$ has $\kappa_T(H_I)=1$. 
Then  $\overline{\kappa}_T(H)$ is equal to $|E\mathcal{T}_H / H|,$ the number of orbits of edges in Dicks $H$-tree.
\end{Prop}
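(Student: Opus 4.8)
The plan is to compute $\kappa_T(H)$ directly from the island--bridge decomposition of $T_H$ and show that, under the hypothesis, it equals $|E\mathcal T_H/H|+1$. By Proposition~\ref{P:kappa} applied to the $H$-subtree $T_H\subseteq T$, we have $\kappa_T(H)=\kappa_{T_H}(H)=c(H)+\mathrm{rank}(T_H/H)$, and moreover finiteness of $\kappa_T(H)$ forces $T_H/H$ to be a finite graph, so all the counts below are finite. Every vertex of $T_H$ lies in exactly one $H$-island (removing bridge edges leaves all vertices in place), and $H$ permutes the islands; I would fix orbit representatives $I_1,\dots,I_N$ of the $H$-action on islands, with stabilisers $H_{I_j}$. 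Since $\kappa_T(H_{I_j})=1\neq 0$ by hypothesis, each $H_{I_j}$ is non-trivial, and each $I_j$ is an $H_{I_j}$-subtree of $T$, so Proposition~\ref{P:kappa} again gives $\kappa_T(H_{I_j})=c_{I_j}(H_{I_j})+\mathrm{rank}(I_j/H_{I_j})$, where $c_{I_j}(H_{I_j})$ counts the $H_{I_j}$-orbits of vertices of $I_j$ with non-trivial stabiliser.

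The key bookkeeping step is that all relevant counts localise to the individual islands. The point is that if $h\in H$ maps a vertex or edge of an island $I$ back into $I$, then $h$ stabilises $I$: indeed $h$ preserves $T_H$ and its $H$-invariant set of bridges, hence permutes islands. Consequently the stabiliser in $H$ of a vertex $v\in I_j$ coincides with its stabiliser in $H_{I_j}$, and the $H$-orbits of vertices (resp.\ of interior, non-bridge edges) meeting $I_j$ are exactly the $H_{I_j}$-orbits. I would then record the resulting identities
\begin{align*}
|VT_H/H|&=\sum_{j=1}^N |VI_j/H_{I_j}|, & c(H)&=\sum_{j=1}^N c_{I_j}(H_{I_j}),\\
|ET_H/H|&=\sum_{j=1}^N |EI_j/H_{I_j}|+|E\mathcal T_H/H|,
\end{align*}
the last one using that every edge of $T_H$ is either interior to a unique island or is a bridge, that the interior-edge orbits decompose over the islands as above, and that the bridge orbits are precisely the edge orbits of $\mathcal T_H$.

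Finally I would feed these into an Euler characteristic computation. Both $T_H/H$ and each $I_j/H_{I_j}$ are connected finite graphs (quotients of the connected $T_H$ and of the connected subtrees $I_j$), so $\mathrm{rank}(\Gamma)=1-\chi(\Gamma)$ with $\chi=|V|-|E|$. Subtracting the edge identity from the vertex identity gives $\chi(T_H/H)=\sum_j\chi(I_j/H_{I_j})-|E\mathcal T_H/H|$, whence
\[
\mathrm{rank}(T_H/H)=1-N+\sum_{j=1}^N\mathrm{rank}(I_j/H_{I_j})+|E\mathcal T_H/H|.
\]
Adding $c(H)=\sum_j c_{I_j}(H_{I_j})$ and regrouping the $j$-th vertex and rank contributions yields $\kappa_T(H)=\sum_{j=1}^N\kappa_T(H_{I_j})+|E\mathcal T_H/H|+1-N$. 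Since each $\kappa_T(H_{I_j})=1$, the sum equals $N$, the two copies of $N$ cancel, and $\kappa_T(H)=|E\mathcal T_H/H|+1$; as $|E\mathcal T_H/H|\ge 0$ this gives $\overline{\kappa}_T(H)=\kappa_T(H)-1=|E\mathcal T_H/H|$, as required.

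I expect the orbit--stabiliser bookkeeping of the second paragraph to be the main obstacle: one must check carefully that the island decomposition is genuinely compatible with the $H$-action, so that each count localises to a single island and no cancellation or double-counting occurs across island orbits, while the Euler characteristic manipulation is then routine once finiteness of $\kappa_T(H)$ legitimises the arithmetic.
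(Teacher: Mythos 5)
Your proof is correct, but it takes a genuinely different route from the paper's. The paper argues algebraically: it compares the two Kurosh free-product decompositions of $H$ induced by its actions on $T_H$ and on the Dicks tree, $H=H_1\ast\cdots\ast H_k\ast F$ and $H=B_1\ast\cdots\ast B_n\ast K$, and the heart of its proof is the claim that the elliptic island stabilisers $B_1,\dots,B_r$ have the same normal closure in $H$ as the $H_i$, whence $F\cong B_{r+1}\ast\cdots\ast B_n\ast K$ and $\overline{\kappa}_T(H)=k+\mathrm{rank}(F)-1=n+\mathrm{rank}(K)-1=|E\mathcal{T}_H/H|$. You instead never leave the quotient graphs: the observation that any $h\in H$ carrying a vertex or edge of an island back into that island must stabilise the island lets you localise all orbit counts to island representatives, and then a single Euler-characteristic computation finishes the job, with the hypothesis $\kappa_T(H_{I_j})=1$ entering only as the numerical value plugged into the sum. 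What each approach buys: yours is more elementary and self-contained, since it avoids the normal-closure claim and the identification of $F$ with a quotient of $H$ (the most delicate step in the paper, which uses the hypothesis structurally to sort the $B_i$ into elliptic and hyperbolic ones and to show each island contains at most one fixed vertex); the paper's version, in exchange, makes explicit how the two Bass--Serre decompositions of $H$ fit together, which is conceptually informative beyond the numerical identity. Two small points worth making explicit in your write-up: the number $N$ of island orbits is finite because distinct island orbits contribute distinct vertex orbits to the finite graph $T_H/H$, and your application of Proposition~\ref{P:kappa} to the pair $(H_{I_j}, I_j)$ is legitimate precisely because you have already noted $H_{I_j}\neq 1$.
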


\begin{proof} As $\kappa_T(H) < \infty$, $T_H/H$ is finite, and hence so is $\T_H/H$.

Since $\kappa_T(H_I)=1$ for all $H$-island $I$, we have that $$\sum_{I\in V\T_H/H} \kappa_T(H_I)=|V\T_H/H|.$$
By  Theorem \ref{thm:formula} we have that
$$\kappa_T(H)=|E\T_H/H|-|V\T_H/H|+1+\sum_{I\in V\T_H/H} \kappa_T(H_I)=|E\T_H/H|+1. $$
Since $H\neq 1$, $\overline{\kappa}_T(H)= \kappa_T(H)-1=|E\T_H/H|.$
\end{proof}

\medskip

Therefore the goal will be to show that for the tree $\mathcal{T}_H$, the stabiliser of any vertex has Kurosh rank 1 (to obtain our main result  Theorem~\ref{T:Main}). Equivalently, we need to show that stabilisers of $H$-islands have Kurosh rank less than 2 (Proposition~\ref{two}) and are non-trivial (Proposition~\ref{notriv}).

\begin{Prop}
\label{two}
Suppose that $H \leqslant G$ and that $\kappa_T(H) \geq 2$. Then there is a $H$-bridge in $T_H$.
\end{Prop}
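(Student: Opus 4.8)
The plan is to produce a single bi-infinite reduced line in $T_H$ whose edge set has a $<$-maximum; that maximal edge is then, by definition, an $H$-bridge. The construction uses two hyperbolic elements of $H$ with \emph{disjoint} axes, and orderability of $G$ enters twice: once to guarantee that such a pair exists, and once (through $G$-invariance of the edge order) to control the edges along an axis.

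First I would record how the order behaves along an axis. If $g\in H$ is hyperbolic with axis $A_g$ and $e$ is an edge of $A_g$, then the edges in the $\langle g\rangle$-orbit of $e$ are exactly $\{eg^n : n\in\Z\}$, and since the edge order is $G$-invariant, $eg^n<eg^{n+1}$ is equivalent (apply the action of $g^{-n}$) to $e<eg$, independently of $n$. Hence each $\langle g\rangle$-orbit of edges on $A_g$ is strictly monotone along the axis, and all orbits increase in the direction $g$ translates. As $A_g$ has only finitely many edge-orbits under $\langle g\rangle$, along the ray of $A_g$ pointing \emph{against} the translation direction of $g$ each orbit strictly decreases, so the edges there are bounded above, the maximum being attained among the finitely many ``top'' edges.

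Next I would produce two hyperbolic $g_1,g_2\in H$ whose axes $A_1,A_2\subseteq T_H$ are disjoint, and this is the step I expect to be the main obstacle, as it is where $\kappa_T(H)\ge 2$ and orderability are both essential. The idea is that $\kappa_T(H)\ge 2$ forces $H$ to be non-elementary: if instead $H$ fixed a vertex, fixed an end, or stabilised a line, each case forces $\kappa_T(H)\le 1$. The first case is immediate, since then $T_H$ is a point. For the other two, orderability does the work: a torsion-free group acting on a line acts only by translations (reflections have order two), so it is infinite cyclic with $\kappa_T=1$; and a group fixing an end with trivial edge stabilisers has no nontrivial elliptic element (a nontrivial elliptic fixing the end would fix a ray to it, hence an edge), so the Busemann homomorphism toward that end is injective and $\Z$-valued, again forcing $H\cong\Z$ and $\kappa_T=1$. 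Being non-elementary, $H$ then contains two hyperbolic elements with disjoint axes by the standard trichotomy for group actions on trees, and these axes lie in $T_H$.

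Finally I would assemble the line. Let $p=[x_1,x_2]$ be the geodesic in $T_H$ joining $A_1$ to $A_2$, meeting $A_i$ only at $x_i$, and let $R_i$ be the ray of $A_i$ issuing from $x_i$ against the translation direction of $g_i$. Then $L=R_1\cup p\cup R_2$ is a reduced bi-infinite path in $T_H$: it is reduced at each $x_i$ because the first edge of $p$ leaves $A_i$ and so differs from the adjacent edge of $R_i$. By the first step each $R_i$ has all edges bounded above with a maximum attained, and $p$ contributes only finitely many edges, so the edge set of $L$ has a genuine $<$-maximum $e^{*}$. As the order on edges is total, $e^{*}$ is strictly the $<$-largest edge of $L$, so $e^{*}$ is an $H$-bridge, completing the proof.
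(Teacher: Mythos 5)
Your overall strategy is the same as the paper's: splice together rays of two axes of hyperbolic elements of $H$ and a connecting segment to get a reduced bi-infinite path in $T_H$ with a $<$-largest edge, which is then an $H$-bridge by definition. Your reduction to two hyperbolics with \emph{disjoint} axes (via the fixed point / fixed end / invariant line classification, then conjugating one axis off the other) is a legitimate variant of the paper's argument, which only produces \emph{distinct} axes and handles a possible finite overlap directly. That part is fine.

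The genuine gap is in your first step, and it is exactly the point where the paper has to work. You claim that, because the edge order is $G$-invariant, ``all orbits increase in the direction $g$ translates.'' What $G$-invariance actually gives is only that each individual orbit $\{eg^n : n\in\Z\}$ is monotone in $n$; it says nothing about the monotonicity directions of \emph{different} orbits agreeing. Concretely, writing an edge of the axis as $(e_0G,a)\in (ET/G)\times G$ in the paper's lexicographic order, its orbit increases in the translation direction precisely when $aga^{-1}>1$. Since the order on $G$ is only invariant under \emph{right} multiplication, conjugation need not preserve positivity, so one orbit on $A_g$ can satisfy $aga^{-1}>1$ while another satisfies $bgb^{-1}<1$. (If $G$ were bi-orderable your claim would hold, since then $aga^{-1}>1$ if and only if $g>1$, independently of $a$; but the paper assumes only right-orderability, and Theorem A needs that generality.) In the mixed situation, some orbit is strictly increasing as you go to infinity along your ray $R_i$, so your argument that $R_i$ has a $<$-largest edge collapses; replacing $g_i$ by $g_i^{-1}$ does not help, because no single choice of direction can reverse all orbits simultaneously. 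The paper closes this gap with a small but crucial device: take $p$ to be the path from $vg^{-1}$ to $vg$, let $e$ be the $<$-largest edge of $p$, and replace $g$ by $g^{-1}$ if necessary so that $e>eg$. Then every edge of the positive ray starting with $e$ has the form $fg^n$ with $f$ an edge of $p$ and $n\geq 0$, whence $fg^n\leq eg^n\leq e$; thus $e$ dominates the entire ray \emph{regardless} of how the other orbits behave. You need this domination trick (or an equivalent) to make your rays have largest edges; as written, your key claim is unjustified and false for general right-orderable $G$.
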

\begin{proof} 
If $H$ fixes a vertex, $T_H$ will simply be this fixed vertex and $\kappa(H) \leq 1$, contradicting the hypothesis.

Now suppose that $T_H$ is a single line. If some $h\in H$ fixes a vertex, then $h^2$ fixes $T_H$ and hence $h^2=1$. This is impossible since $H$ is right-orderable, and hence torsion free. Thus $H$  acts freely on $T_H,$ i.e. $H\cong \mathbb{Z}$, so has Kurosh rank 1, again contradicting the hypothesis.

Hence there exist two hyperbolic elements $g,h\in H$ with distinct axes, $A_g \neq A_h$. These two might intersect non-trivially, but they can only intersect in finitely many edges. If they did intersect in an infinite ray, then the commutator $g h g^{-1} h^{-1}$ would fix an infinite subray and hence an edge. However, the action is free on the edge set, and hence the commutator would be the trivial element, implying that $g$ and $h$ commute and therefore have the same axis, contradicting our choice of $g$ and $h$.

Now choose a vertex $v\in A_g$ and let $p$ denote a path from $v g^{-1}$ to $vg$ with $<$-largest edge $e$. By replacing $g$ with $g^{-1}$ if necessary, we may assume that $e>eg.$ It follows that $e>eg>eg^2 >\ldots >e g^n \ldots$. Therefore $e$ is the largest edge in the infinite ray starting with $e$ and continuing in the positive direction of the axis. (Note that the action of an hyperbolic element $g$ on its axis $A_g$ induces an orientation of $ A_g$ with respect to which g translates in the positive direction.) Likewise $e g^n$ is the largest edge in the infinite ray starting with $e g^n$.

It follows that every infinite ray $p_{\infty}$ in $A_g$ starting at any vertex of $A_g$ and going in the positive direction has a $<$-largest edge. Similarly,  every infinite ray $r_{\infty}$ in $A_h$ starting at any vertex of $A_h$ and going in the positive direction has a $<$-largest edge.

Thus there will exist a reduced bi-infinite path of the form $p^{-1}_{\infty} \cdot q\cdot r_{\infty}$ where $q$ is a finite path from $A_g$ to $A_h$; in the case where $A_g$ and $A_h$ are disjoint, $q$ is the path from one axis to the other, and in the case where they intersect, $q$ is a subpath of the intersection, possibly a single vertex. In either case, $p^{-1}_{\infty} \cdot q\cdot r_{\infty}$ has a $<$-largest edge which is then a $H$-bridge in $T_H$.
\end{proof}

\begin{Lem}\label{lem:inf}
Suppose that $H \leqslant G$ and that $\kappa_T(H)=\infty$. Then there are infinitely many $H$-orbits  of $H$-bridges in $\T_H$.
\end{Lem}
\begin{proof}
If $|E\T_H/H|<\infty$, then $|V\T_H/H|<\infty$ and also $\rank(\T/H) < \infty$.  Hence, by Theorem \ref{thm:formula}, $\sum_{vH\in V\T/H} \kappa_{T}(H_v)=\infty$. Therefore there exists an $vH\in V\T/H$ such that $\kappa_T(H_v)=\infty$. So by Proposition \ref{two}, $T_{H_v}$ contains an $H_v$-bridge, and hence an $H$-bridge. This contradicts the fact that  $H_v$ is the stabiliser of an island.

\end{proof}

\begin{Prop}
\label{notriv}
Let $H$ be a non-trivial subgroup of $G$  with $\kappa_T(H)<\infty$ and let $I$ be an $H$-island in $T_H$ with stabiliser $H_I$. Then $H_I$ is non-trivial.
\end{Prop}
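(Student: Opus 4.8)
The plan is to argue by contradiction, and the whole proof will rest on a single extremal observation about the \emph{smallest} bridge incident to the island. First I would dispose of the degenerate case: if $H$ fixes a point of $T$, then (as edge stabilisers are trivial) $T_H$ is that single fixed vertex, the unique $H$-island is $T_H$ itself, and its stabiliser is all of $H$, which is non-trivial by hypothesis. So from now on I assume $H$ contains a hyperbolic element, whence $T_H$ is infinite, and moreover, since $\kappa_T(H)<\infty$, Proposition~\ref{P:kappa} tells us that $T_H/H$ is a finite graph. Now suppose, for contradiction, that the island $I$ has trivial stabiliser $H_I=1$.

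The next step is to squeeze strong finiteness out of the assumption $H_I=1$. Distinct $H$-islands are disjoint, being the vertices of $\mathcal{T}_H$; since $H_I=1$, the translates $\{Ih:h\in H\}$ are pairwise disjoint, so $E(I)$ and $V(I)$ inject into the finite graph $T_H/H$, and hence $I$ is a \emph{finite} subtree. In addition, every vertex $v\in I$ satisfies $H_v\le H_I=1$ (an element fixing $v$ preserves the island containing $v$, which is $I$), so $H_v=1$; because edge stabilisers are trivial and $T_H/H$ is finite, a vertex with trivial stabiliser has finite degree. Therefore $I$ has only finitely many incident bridges, and, being a finite proper subtree of the connected infinite tree $T_H$, it has at least one. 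This lets me choose a $<$-minimal incident bridge $b_0$.

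The heart of the matter is then to show that $b_0$ cannot actually be a bridge, which is the desired contradiction. Since $b_0$ is a bridge there is a reduced bi-infinite path $\gamma$ in $T_H$ on which $b_0=(u,u')$, with $u\in I$, is the $<$-largest edge. Following $\gamma$ from $u$ into $I$, we obtain a reduced ray which cannot remain inside the finite tree $I$ and cannot re-use the edge $b_0$ (a reduced bi-infinite path in a tree crosses each edge at most once); hence it must leave $I$ through some other incident bridge $b_1\neq b_0$. By minimality of $b_0$ we then have $b_1>b_0$, yet $b_1$ is an edge of $\gamma$, contradicting that $b_0$ is the $<$-largest edge of $\gamma$. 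This contradiction forces $H_I\neq 1$.

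I expect the main obstacle to be locating precisely this extremal argument: the temptation is to work with the \emph{largest} edge incident to $I$, but that edge is allowed to be a bridge and yields no contradiction. The decisive realisation is that one should instead take the \emph{smallest} incident bridge, and that the finiteness of $I$ (which is exactly what $H_I=1$ buys us) is what forces any bi-infinite witnessing path to escape through a strictly larger incident bridge. The supporting technical point that needs care is the chain of finiteness deductions in the second paragraph, in particular that a trivial-stabiliser vertex has finite degree, so that a minimal incident bridge exists at all.
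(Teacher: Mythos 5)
Your proof is correct and is essentially the paper's argument: both hinge on taking the $<$-minimal bridge incident to the island $I$ and playing its minimality against the fact that, being a bridge, it is the $<$-largest edge of a reduced bi-infinite path, so that the ray of that path entering $I$ cannot exit through another incident bridge. The only difference is organisational: the paper argues directly (minimality forces the ray to stay in $I$, so $I$ is infinite and, as $T_H/H$ is finite, two of its edges lie in one $H$-orbit, yielding a non-trivial element of $H_I$), while you assume $H_I=1$, deduce that $I$ is finite with finitely many incident bridges, and reach the same clash of inequalities as a contradiction.
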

\begin{proof}

If there is no $H$-bridge, then $H_I=H$ and, since $H$ is non-trivial, $H_I$ is non-trivial.

Consider the set $\{ e \ : \ e  \textrm{ is a bridge whose initial point is in } I \} $. Note that $T_H/H$ is a finite graph, therefore if the set above is infinite, it must contain edges $e$ and $eh$ for some $1 \neq h \in H$. Clearly, by looking at initial points, $h$ is a non-trivial element preserving $I$ and we would have that $H_I$ would be non-trivial.

Therefore, we may assume that the set above is finite and list the elements in order, $e_1 < e_2 < \ldots < e_s$.

Then $e_1$ is the largest edge in a reduced bi-infinite path in $T_H$. The tail of this path is a ray whose initial vertex is in $I$ and hence the entire ray must remain in $I$ due to the minimality of $e_1$. Therefore, again as $T_H/H$ is finite, there are two distinct edges of $I$ in the same $H$-orbit, and therefore $H_I$ is non-trivial (and note that this is really a contradiction, since it implies that the set above is infinite unless $\mathcal{T}_H$ is a single vertex).

\end{proof}

We now summarise the three propositions in one theorem.
\begin{Thm}\label{T:Main}
Let $G$ be an right-orderable group and $T$ an $G$-tree with trivial edge stabilisers and finitely many orbits of edges. Let $H$ be a subgroup of $G.$ Then $\overline{\kappa}_T(H)$ is equal to $|E\mathcal{T}_H / H|,$ the number of orbits of edges in Dicks $H$-tree.
\end{Thm}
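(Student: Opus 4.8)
The plan is to deduce the theorem by assembling Propositions~\ref{ecalc}, \ref{two} and \ref{notriv}. Proposition~\ref{ecalc} already delivers the desired equality $\overline{\kappa}_T(H)=|E\mathcal{T}_H/H|$ under the single hypothesis that every $H$-island stabiliser $H_I$ satisfies $\kappa_T(H_I)=1$, so the whole problem collapses to verifying that hypothesis. I would first dispose of the trivial case $H=1$ (both sides are $0$). If $\kappa_T(H)=\infty$ then $\overline{\kappa}_T(H)=\infty$; in that case I would check separately that $\mathcal{T}_H$ has infinitely many edge-orbits, using Proposition~\ref{P:kappa} to convert the infinitude of $\kappa_T(H)$ into infinitude of $T_H/H$, so that both sides are infinite. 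For the remainder I assume $1\neq H$ and $\kappa_T(H)<\infty$, and aim to show $\kappa_T(H_I)=1$ for every $H$-island $I$.

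For the upper bound $\kappa_T(H_I)\leq 1$ I would argue by contradiction. Suppose some island stabiliser has $\kappa_T(H_I)\geq 2$. Since $H_I\leq G$ and orderability is inherited, Proposition~\ref{two} produces an $H_I$-bridge in $T_{H_I}$, i.e.\ a $T_{H_I}$-bridge. The key observation is that $I$ is an $H_I$-invariant subtree of $T$, so by minimality of $T_{H_I}$ we have $T_{H_I}\subseteq I\subseteq T_H$. The monotonicity of the bridge condition recorded in the definition of Dicks trees (any $T_0$-bridge is a $T_1$-bridge when $T_0\subseteq T_1$) then promotes this $T_{H_I}$-bridge to a $T_H$-bridge, that is, to an $H$-bridge. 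But that $H$-bridge is an edge of $T_{H_I}\subseteq I$, contradicting the fact that $I$, being a component of $T_H$ with all $H$-bridges removed, contains no $H$-bridge. Hence $\kappa_T(H_I)\leq 1$.

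For the lower bound, Proposition~\ref{notriv} shows each island stabiliser $H_I$ is non-trivial, and a non-trivial group acting on a tree has Kurosh rank at least one (Kurosh rank zero would force $H_I$ to act freely with tree quotient, hence to be trivial). Combining the two estimates gives $\kappa_T(H_I)=1$ for every $H$-island, which is exactly the hypothesis of Proposition~\ref{ecalc}; that proposition then yields $\overline{\kappa}_T(H)=|E\mathcal{T}_H/H|$ and completes the proof.

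The main obstacle is the upper-bound step, and within it the bookkeeping of which tree each ``bridge'' inhabits. One must be certain that the bridge handed over by Proposition~\ref{two} for the group $H_I$ genuinely becomes an $H$-bridge of $T_H$: this rests on making the inclusion $T_{H_I}\subseteq I$ interlock with the monotonicity of the bridge property under nested subtrees, and on remembering that islands contain no $H$-bridges by construction. Once these two facts are lined up the contradiction is immediate, and the rest of the argument is a formal assembly of the three propositions.
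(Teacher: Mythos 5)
Your proposal is correct and follows essentially the same route as the paper: after disposing of the trivial and infinite-rank cases, you show every island stabiliser $H_I$ has $\kappa_T(H_I)=1$ by combining Proposition~\ref{notriv} (non-triviality, giving the lower bound) with the contrapositive of Proposition~\ref{two} (the bridge produced for $H_I$ would, via $T_{H_I}\subseteq I\subseteq T_H$ and bridge monotonicity, be an $H$-bridge inside an island, a contradiction), and then conclude by Proposition~\ref{ecalc}. This is exactly the paper's argument, including the handling of the upper-bound contradiction.
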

\begin{proof}
If $H$ is the trivial group, then the theorem holds. So we assume $H$ is non-trivial.
If $\kappa_T(H)=\infty,$ then by Lemma \ref{lem:inf}, $\mathcal{T}_H/H$ is also infinite, and the theorem holds. So we assume $\kappa_T(H)<\infty.$
For every  $H$-island $I,$ by Proposition~\ref{notriv}, its $H$-stabiliser $H_I$ is non-trivial.
By Proposition~\ref{two}, if $\kappa(H_I) \geq 2$ then there would be a $T_{H_I}$-bridge, which would imply the existence of an $H$-bridge in $I$, contradicting the definition of $I$. Then $\kappa(H_I)=1.$
The theorem now follows from  Proposition~\ref{ecalc}.
\end{proof}

\begin{proof}[Proof of Theorem A]
If either $H$ or $K$ is trivial, the theorem trivially holds. Hence, we assume that $H$ and $K$ are non-trivial.

By Remark \ref{rem:finiteG}, the theorem also hold when the Kurosh rank of $H$ or $K$ is infinite. Therefore, again by Remark \ref{rem:finiteG}, we can assume that $T/G,$ $T_H/H$ and $T_K/K$ are finite.

The inclusions $T_{(H^g \cap K)} \subseteq T_{H^g}=(T_H)g$ and $T_{(H^g \cap K)} \subseteq T_K$ induce a (diagonal) graph map, sending $e(H^g \cap K)$ to $(e g^{-1} H , eK)$. Therefore we get a map,

$$
 \bigcup_{g\in K \backslash G /H} T_{(H^g \cap K)} / (H^g \cap K) \to T_H / H \times T_K / K
$$ which is injective on edges as edge stabilisers are trivial and the union is over distinct double coset representatives.

In turn, this induces a map on the following edge sets:
\begin{equation}\label{eq:map}
\bigcup_{g\in K \backslash G /H}E\mathcal{T}_{(H^g \cap K)} / (H^g \cap K) \to E\mathcal{T}_H / H \times E\mathcal{T}_K / K
\end{equation} which is also injective.

By Theorem \ref{T:Main}  we know that the number of edges in $\mathcal{T}_H/H$ is equal to $\overline{\kappa}_T(H)$. And similarly for $K$ and $H^g \cap K$ whenever they are non-trivial.

Therefore, the injectivity of the map \eqref{eq:map} on edges, gives us the result.
\end{proof}

\begin{Rem}
We note that in general, there is no obvious way to extend the map~\eqref{eq:map} to a graph map. That is because, while $(H \cap K)$-bridges map to $H$-bridges, it does not follow that non-bridges map to non-bridges. In general, an $(H \cap K)$-island will consist of multiple $H$ islands as well as some $H$-bridges.

For instance, if $H$ is free of rank 2 and $K$ is any cyclic subgroup. Then $T_{H \cap K}=T_K$ is a line and a $K$-island. However, this line will contain $H$-bridges unless the generator of $K$ acts elliptically on $\mathcal{T}_H$, which is not always the case.
\end{Rem}

\bigskip

\noindent{\textbf{\Large{Acknowledgments}}}
The first author was supported by MCI (Spain) through
project MTM2008-01550 and EPSRC through project EP/H032428/1.
The third named was supported by the Nuffield Foundation Grant URB/ 39357 during the period of this research.

\medskip
{\footnotesize }

\bibliographystyle{amsplain}

\bigskip

\textsc{Yago Antol\'{i}n, School of Mathematics,
University of  Southampton, University Road,
Southampton SO17 1BJ, UK}

\emph{E-mail address}{:\;\;}\url{yago.anpi@gmail.com}

\emph{URL}{:\;\;}\url{http://sites.google.com/site/yagoanpi/}

\medskip

\textsc{Armando Martino, School of Mathematics,
University of  Southampton, University Road,
Southampton SO17 1BJ, UK}

\emph{E-mail address}{:\;\;}\url{A.Martino@soton.ac.uk}

\emph{URL}{:\;\;}\url{http://www.personal.soton.ac.uk/am1t07/}

\medskip

\textsc{Inga Schwabrow, School of Mathematics,
University of  Southampton, University Road,
Southampton SO17 1BJ, UK}

\emph{E-mail address}{:\;\;}\url{is3g08@soton.ac.uk}

\end{document}